\documentclass[11pt]{article}

\usepackage[margin=1in]{geometry}
\usepackage{amsmath,amssymb,amsthm,mathtools}
\usepackage{array,booktabs,tabularx}
\usepackage{enumitem}
\usepackage{float}
\usepackage[normalem]{ulem}
\usepackage{tikz}
\usetikzlibrary{arrows.meta}
\usepackage{pgfplots}
\pgfplotsset{compat=1.18}
\usepgfplotslibrary{fillbetween}
\usepackage[colorlinks=true,linkcolor=blue,citecolor=blue,urlcolor=blue]{hyperref}

\newtheorem{theorem}{Theorem}[section]
\newtheorem{proposition}[theorem]{Proposition}
\newtheorem{lemma}[theorem]{Lemma}
\newtheorem{corollary}[theorem]{Corollary}

\newtheorem{example}[theorem]{Example}
\theoremstyle{remark}
\newtheorem{remark}[theorem]{Remark}

\newcommand{\R}{\mathbb R}
\newcommand{\Lcone}{\mathcal L}

\newcommand{\rank}{\operatorname{rank}}
\newcommand{\Span}{\operatorname{span}}
\newcolumntype{Y}{>{\raggedright\arraybackslash}X}

\title{An Exact Dual for Second-Order Cone Programming\\
Using Only Lorentz-Cone Constraints}
\author{Hao Hu\\
\small School of Mathematical and Statistical Sciences, Clemson University\\
\small Clemson, South Carolina 29634, USA\\
\small \href{mailto:hhu2@clemson.edu}{hhu2@clemson.edu}
\quad\textperiodcentered\quad
\url{https://huhao.org/}}
\date{September 6, 2026}

\begin{document}
\maketitle

\begin{abstract}
Without a constraint qualification, second-order cone programs can have a
duality gap or an unattained dual value.  For every such program with
arbitrary real coefficients, we construct an exact dual using only affine
equations and memberships in products of Lorentz cones.  Every feasible dual
point gives a valid bound.  Whenever the primal is feasible with finite value,
the dual attains that value.  This answers a question raised by
P\'olik and Terlaky and reiterated in subsequent work: whether an exact dual
for second-order cone programming can use only Lorentz-cone constraints.  The
same construction produces an affine second-order cone system that is
feasible exactly when a given affine
second-order cone system is infeasible, including in the weakly infeasible
case.  Both formulations have polynomial size and are obtained uniformly from
the input coefficients using only fixed rational constants.  These are
formulation-size results over exact real data; they do not imply
polynomial-time solvability or polynomial bounds on the bit length of rational
certificates.
\end{abstract}

\noindent\textbf{Keywords.}
Second-order cone programming; exact duality; facial reduction; weak
infeasibility; Ramana dual; conic certificates.

\medskip
\noindent\textbf{MSC 2020.} 90C25, 90C46, 90C22.

\section{Introduction}

Strong duality is central to the interpretation and use of conic
optimization: dual-feasible points provide bounds, and dual attainment
provides optimality certificates.  Under a constraint qualification such as
Slater's condition, second-order cone programming (SOCP) has the familiar
strong-duality theory.  Without such a condition, however, the ordinary conic
dual may have a duality gap or may fail to attain its value.

Facial reduction, introduced by Borwein and Wolkowicz
\cite{BorweinWolkowicz1981Facial,BorweinWolkowicz1981Regularizing}, restores
strong duality by replacing the original cone with the minimal face that
contains the feasible set.  This procedure is sequential.  To turn it into
one explicit exact dual, the required facial-reduction information must
instead be encoded in a single finite conic formulation.  For semidefinite
programming, Ramana's dual accomplishes this using semidefinite constraints
\cite{Ramana1997}.  For SOCP, the corresponding challenge is to obtain an
exact dual using only Lorentz-cone constraints, without passing to a larger
cone class.  The exact dual constructed here answers a question raised by
P\'olik and Terlaky in 2007: whether such a dual can be expressed using only
Lorentz cones \cite{PolikTerlaky2007}.

Weakly infeasible systems present a related certification problem: their
distance to feasibility is zero, so ordinary strict separation does not yield
an attained normalized infeasibility certificate.  The question addressed
here is whether the facial-reduction information and terminal dual slack can
be encoded together in one polynomial-size SOCP, so that exact duality and
infeasibility certification remain within the original cone class.

\subsection{Problem setting}

Let \(K\subseteq\R^n\) be a product of Lorentz cones and nonnegative scalar
cones.  We study the conic program
\begin{equation}
 (P)\qquad
 p^*=\inf\{\langle c,x\rangle:Ax=b,\ x\in K\},
 \label{eq:intro-primal}
\end{equation}
where \(A\in\R^{m\times n}\), \(b\in\R^m\), and \(c\in\R^n\).  A
\emph{SOCP formulation} uses only affine equations and memberships in
products of Lorentz cones; a nonnegative scalar cone can be represented as a
fixed Lorentz-cone slice.  An exact dual for \eqref{eq:intro-primal} must
satisfy weak duality for every dual-feasible point and attain \(p^*\) whenever
the primal is feasible with finite value, without assuming a constraint
qualification.

For an affine SOCP feasibility instance \(\Sigma\), we seek an explicit
transformation \(\mathfrak N\) into another affine SOCP feasibility system
such that
\begin{equation}
 \mathfrak N(\Sigma)\text{ is feasible}
 \quad\Longleftrightarrow\quad
 \Sigma\text{ is infeasible}.
 \label{eq:intro-infeasibility-system}
\end{equation}
A feasible point of \(\mathfrak N(\Sigma)\) is an existential conic
certificate of infeasibility; the construction does not assert that such a
point can be found efficiently.

\subsection{Main results and implications}

The construction separates Pataki's general principle from the SOCP-specific
contribution.  Pataki proved that an exact extended dual can use a set-valued
mapping in place of the tangent-space constraint, provided that the mapping
is contained in the true tangent space and recovers every tangent vector after
positive rescaling of the vector at which the tangent space is taken
\cite[Corollary~3]{Pataki2013}.  The contribution specific to SOCP is an
affine representation of such a mapping using only Lorentz and nonnegative
scalar cones.  The argument has three parts.
\begin{enumerate}[leftmargin=2.2em]
\item Subsections~\ref{sec:lorentz-tangent-spaces}--
\ref{sec:lifted-tangent-relation} identify fixed linear maps that generate
the tangent spaces of scalar and Lorentz cones, and use them to construct an
SOCP-representable set-valued mapping satisfying these containment and
rescaling requirements.
\item Subsection~\ref{sec:facial-reduction-encoding} combines this mapping
with a backward-rescaling argument and expresses the resulting
facial-reduction data using explicit product-cone variables.
\item Subsections~\ref{sec:direct-exact-dual} and
\ref{sec:infeasibility-certificates} derive the polynomial-size exact dual in
SOCP form and apply standard homogenization to obtain an infeasibility
certificate within the same cone class.
\end{enumerate}

Theorem~\ref{thm:direct-exact-dual} constructs an exact dual in SOCP form for
\eqref{eq:intro-primal}.  Every feasible point gives a valid lower bound, and
the dual attains the primal value whenever that value is finite and the primal
is feasible.  Corollary~\ref{cor:pure-socp-infeasibility-certificate}
specializes the same formulation to an SOCP system satisfying
\eqref{eq:intro-infeasibility-system}, including for weakly infeasible
systems.
Both formulations have polynomial size and apply to arbitrary real
coefficients.

The result keeps exact duality and infeasibility certification within the
original cone class.  It removes the duality gap and nonattainment at the
level of the extended formulation, but it does not imply that the formulation
is numerically well conditioned.  The construction uses a set-valued mapping
contained in the tangent space and recovers the required tangent directions
by rescaling along one facial-reduction sequence, rather than representing
the full tangent-space constraint.  This restricted existential
representation is the source of the polynomial size.

These are formulation-size results over exact real data; they do not imply
polynomial solution time or polynomial bit length of rational certificates.

\subsection{Relation to prior work}

Ramana's exact dual for semidefinite programming (SDP) records enough of a
facial-reduction sequence to recover strong duality \cite{Ramana1997}.
Ramana, Tun\c{c}el, and Wolkowicz made the connection between this extended
dual and facial reduction explicit \cite{RamanaTuncelWolkowicz1997}.  P\'olik
and Terlaky developed exact duality over symmetric cones, but their
Lorentz-cone specialization uses Lorentz--Siegel cones rather than a
formulation confined to Lorentz cones.  They explicitly left open whether an
exact dual using only Lorentz cones exists \cite[p.~19]{PolikTerlaky2007}.
As late as 2021, Louren\c{c}o, Muramatsu, and Tsuchiya likewise observed that
it was unclear whether the alternative dual obtained from Pataki's approach
could be expressed using second-order cone constraints
\cite[p.~461]{LourencoMuramatsuTsuchiya2021}.

Pataki expressed exact duality through facial reduction and tangent spaces.
More specifically, his Corollary~3 permits the tangent-space constraint to be
replaced by any set-valued mapping satisfying the containment and
positive-rescaling properties used here \cite[Corollary~3]{Pataki2013}.  That
result is an abstract exact-duality principle; it does not provide an affine
formulation of such a mapping using Lorentz-cone constraints.  For broader
background on degeneracy and facial reduction in conic optimization, see
Drusvyatskiy and Wolkowicz \cite{DrusvyatskiyWolkowicz2017}.
Section~\ref{sec:related} gives the precise correspondence with Pataki's
framework and compares the present formulation with other exact-duality
certificates.

\subsection{Organization}

Section~\ref{sec:preliminaries} reviews the required cone geometry and facial
reduction.  Section~\ref{sec:lifted-tangent-exact-dual} constructs the
SOCP-representable mapping, applies the backward-rescaling argument to
represent facial-reduction sequences, and derives the exact dual in SOCP form
and the infeasibility certificate.  It concludes with a weakly infeasible
example.  Section~\ref{sec:related} gives the precise correspondence with
the extended-duality framework above, compares the construction with other exact-duality
certificates, and states its scope.

\section{Conic preliminaries and facial reduction}
\label{sec:preliminaries}

\subsection{Scalar and Lorentz cones}

For \(n\geq2\), the Lorentz cone is
\[
 \Lcone_n=\{(t,z)\in\R\times\R^{n-1}:t\geq\|z\|_2\}.
\]
It is closed, pointed, and self-dual.  The Jordan ranks of \(\Lcone_n\) and
\(\R_+\) are two and one, respectively, and rank is additive across products:
\[
 \rank\!\left(\prod_i Q_i\right)=\sum_i\rank Q_i.
\]
For \(\Lcone_n\), rank two is also visible from its face structure: its only
nonzero proper faces are rays, so every strictly descending face chain has at
most two proper steps.  Scalar factors \(\R_+\) may instead be represented by
the Lorentz slice \((s,0)\in\Lcone_2\).  Free scalars may be written as
differences of nonnegative scalars.

\subsection{Facial reduction for cone--subspace intersections}

For a face \(F\) of \(K\), write
\[
 F^*=\{y:\langle y,x\rangle\geq0\text{ for every }x\in F\},
 \qquad
 F^\perp=(\Span F)^\perp.
\]
Let \(L\subseteq\mathbb R^n\) be a linear subspace and fix \(\ell\geq0\).
A facial-reduction sequence of length \(\ell\) for \(L\cap K\) is specified
by reducing directions
\((g_1,\ldots,g_\ell)\) and the sequence of faces
\((F_0,\ldots,F_\ell)\) generated recursively from \(F_0=K\) by
\begin{equation}
 g_j\in L^\perp\cap F_{j-1}^*,\qquad
 F_j=F_{j-1}\cap g_j^\perp,\qquad
 F_j\subsetneq F_{j-1}
 \qquad(1\leq j\leq\ell).
 \label{eq:fr-chain-intro}
\end{equation}
Every point of \(L\cap K\) lies in every \(F_j\).  Each step strictly
decreases the Jordan rank, so \(\ell\leq\rank K\).  In particular, a
facial-reduction sequence reaching the minimal face has length at most
\(\rank K\).  This is the product-Lorentz specialization of the standard
bound by the longest chain of faces
\cite[Theorem~2.3]{PolikTerlaky2007}.

Scalar and Lorentz cones are nice \cite[p.~9]{Pataki2013}, and niceness is
preserved by finite products.  Hence \(K\) is self-dual and nice.  Therefore,
for every face \(F\) of \(K\),
\begin{equation}
 F^*=K+F^\perp.
 \label{eq:niceness}
\end{equation}

\section{The exact dual and infeasibility certificates}
\label{sec:lifted-tangent-exact-dual}

Pataki's simplified extended-dual framework permits the tangent-space
constraint to be replaced by a set-valued mapping that satisfies containment
and positive-rescaling conditions \cite[Corollary~3]{Pataki2013}.  The main task
in this section is to construct such a mapping using only the original
Lorentz and scalar cone blocks.  The construction separates two operations.  At
facial-reduction step \(j\), it represents the cumulative reducing vector as
\(y_j=q_j+z_j\), where \(q_j\) is a cone vector and \(z_j\) is tangent at the
preceding cone vector \(q_{j-1}\).  The final dual slack uses the same
decomposition but need not expose another face.  An affine lift represents
each tangent component using original cone blocks and fixed linear maps.  The
decomposition expresses the facial geometry, while the lift keeps the
formulation within SOCP.  Rescaling permits each indexed constraint to depend
only on its immediate predecessor.

We first construct the mapping for one cone block and introduce
product indices only when assembling the blocks.  Let \(Q\) be either
\(\mathbb R_+\) or a Lorentz cone \(\mathcal L_n\).  In the Lorentz case, use
coordinates \(0,\ldots,n-1\) and set
\[
 J=\operatorname{diag}(1,-1,\ldots,-1),\qquad
 S_{ab}=e_ae_b^T-e_be_a^T.
\]
Here \(e_a\) is the corresponding coordinate vector.  Associate with \(Q\)
the fixed map family
\begin{equation}
 \mathcal R=
 \begin{cases}
  \{I_1\},&Q=\mathbb R_+,\\[1mm]
  \{I_n\}\cup
  \{S_{ab}J:0\leq a<b<n\},&Q=\mathcal L_n.
 \end{cases}
 \label{eq:fixed-map-family}
\end{equation}

\subsection{Tangent spaces of Lorentz cones}
\label{sec:lorentz-tangent-spaces}

For \(Q=\mathcal L_n\) and \(u\in Q\), following Pataki, define the tangent
space
\begin{equation}
 \operatorname{tan}(u,Q):=(Q\cap u^\perp)^\perp.
 \label{eq:block-tangent-space}
\end{equation}
This is the specialization of \cite[equation~(2.2)]{Pataki2013} to the
self-dual cone \(Q\).

The following elementary identity motivates the fixed maps.  For every
nonzero \(w\in\R^n\),
\begin{equation}
 w^\perp=\operatorname{span}\{S_{ab}w:0\leq a<b<n\}.
 \label{eq:elementary-skew-span}
\end{equation}
The inclusion from right to left follows from skew symmetry.  Conversely, for
\(y\perp w\), the matrix
\[
 \Omega_y=\frac{yw^T-wy^T}{\|w\|_2^2}
\]
is skew-symmetric and satisfies \(\Omega_yw=y\).  Since the \(S_{ab}\) form a
basis of the skew-symmetric matrices, \(y\) belongs to the span in
\eqref{eq:elementary-skew-span}.

If \(v\) is a nonzero boundary point of \(\Lcone_n\), then
\(\Lcone_n\cap v^\perp=\R_+Jv\), and
\eqref{eq:elementary-skew-span} with \(w=Jv\) describes the orthogonal
complement of this ray.  This explains the maps \(S_{ab}J\) in
\eqref{eq:fixed-map-family}; the identity map supplies the remaining
direction when \(v\) is interior.

\begin{lemma}[Tangent-space generators]
\label{lem:orthogonality-transport}
Let \(Q=\mathcal L_n\), and let \(\mathcal R\) be the corresponding map
family in \eqref{eq:fixed-map-family}.  Then, for every \(v\in Q\),
\begin{equation}
 \operatorname{tan}(v,Q)=\operatorname{span}\{Rv:R\in\mathcal R\}.
 \label{eq:single-vector-tangent-span}
\end{equation}
In particular, \(Rv\in\operatorname{tan}(v,Q)\) for every
\(R\in\mathcal R\).  Equivalently, every
\(z\in\operatorname{tan}(v,Q)\) can be written as
\(z=\sum_{R\in\mathcal R}\alpha_RRv\) for some \(\alpha_R\in\mathbb R\).
\end{lemma}

\begin{proof}
If \(v=0\), both sides of
\eqref{eq:single-vector-tangent-span} are zero.  If \(v\) is a nonzero
boundary point, then \(Q\cap v^\perp=\mathbb R_+Jv\), and
\eqref{eq:elementary-skew-span} with \(w=Jv\) gives
\(\operatorname{tan}(v,Q)=(Jv)^\perp
=\operatorname{span}\{S_{ab}Jv:a<b\}\).  The identity generator adds no new
direction: writing \(v=(v_0,\bar v)\), the boundary relation
\(v_0=\|\bar v\|_2\) gives
\(\langle v,Jv\rangle=v_0^2-\|\bar v\|_2^2=0\), and hence
\(v\in(Jv)^\perp\).  If \(v\) is interior, then
\(\operatorname{tan}(v,Q)=\mathbb R^n\).  Since \(Jv\neq0\),
\eqref{eq:elementary-skew-span} shows that the skew images span the
\((n-1)\)-dimensional hyperplane \((Jv)^\perp\).  The identity image
\(v=I_nv\) lies outside this hyperplane because \(\langle v,Jv\rangle>0\);
together they span \(\mathbb R^n\).
\end{proof}

\paragraph{Scalar case.}
For \(Q=\mathbb R_+\), the same definition gives
\begin{equation}
 \operatorname{tan}(0,\mathbb R_+)=\{0\},\qquad
 \operatorname{tan}(u,\mathbb R_+)=\mathbb R\quad(u>0).
 \label{eq:scalar-tangent-space}
\end{equation}
Consequently, Lemma~\ref{lem:orthogonality-transport} remains valid for the
scalar cone with \(\mathcal R=\{I_1\}\).

\subsection{An SOCP-representable auxiliary mapping}
\label{sec:lifted-tangent-relation}

Pataki's extended-duality result \cite[Corollary~3]{Pataki2013} reduces the
exact-duality question to finding a set-valued mapping with the two properties
stated in Proposition~\ref{prop:lifted-tangent}.  The following definition
provides such a mapping for a scalar or Lorentz block; this is the central
SOCP-specific construction.

For the single block \(Q\) and map family \(\mathcal R\) above, define the
set-valued mapping
\begin{equation}
\begin{aligned}
 \operatorname{tan}'(u,Q):=\biggl\{z:\ &u=w+\sum_{R\in\mathcal R}(v_R^++v_R^-),\\
                         &z=\sum_{R\in\mathcal R}R(v_R^+-v_R^-),
 \quad w,v_R^+,v_R^-\in Q\biggr\}.
\end{aligned}
 \label{eq:lifted-block-tangent}
\end{equation}
The first line allocates conic pieces of \(u\); the second applies fixed maps
and permits arbitrary signs.  Thus \eqref{eq:lifted-block-tangent} is an
affine lifted representation over copies of \(Q\).

\begin{proposition}[Containment and rescaling]
\label{prop:lifted-tangent}
For \(Q\) and \(\mathcal R\) as in \eqref{eq:fixed-map-family}, and every
\(u\in Q\),
\begin{enumerate}[label=\textup{(\alph*)},leftmargin=2.2em]
\item \(\operatorname{tan}'(u,Q)\subseteq\operatorname{tan}(u,Q)\).
\item If \(z\in\operatorname{tan}(u,Q)\) and \(\gamma>0\), then
\(\gamma z\in\operatorname{tan}'(Mu,Q)\) for some \(M>0\).
\end{enumerate}
\end{proposition}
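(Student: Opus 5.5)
For (a) I will prove the monotonicity fact \(\operatorname{tan}(v,Q)\subseteq\operatorname{tan}(u,Q)\) for every conic piece \(v\) of \(u\), and then apply Lemma~\ref{lem:orthogonality-transport} to each piece. For (b) I will take the representation from Lemma~\ref{lem:orthogonality-transport} and split it into signs, using positive multiples of \(u\) as the pieces. I treat the Lorentz and scalar blocks together. This is possible because Lemma~\ref{lem:orthogonality-transport} holds for \(Q=\mathbb R_+\) with \(\mathcal R=\{I_1\}\), by \eqref{eq:scalar-tangent-space}. I also use that \(Q\) is self-dual.

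\emph{Part (a).} Let \(z\in\operatorname{tan}'(u,Q)\). Then \(u=w+\sum_R(v_R^++v_R^-)\) with all pieces in \(Q\), and \(z=\sum_R R(v_R^+-v_R^-)\). The key step is the following. If \(v\in Q\) and \(u-v\in Q\), then \(Q\cap u^\perp\subseteq Q\cap v^\perp\). To see this, take \(y\in Q\cap u^\perp\). Then \(0=\langle y,u\rangle=\langle y,v\rangle+\langle y,u-v\rangle\). By self-duality both terms are nonnegative, so \(\langle y,v\rangle=0\). Taking orthogonal complements gives \(\operatorname{tan}(v,Q)=(Q\cap v^\perp)^\perp\subseteq(Q\cap u^\perp)^\perp=\operatorname{tan}(u,Q)\). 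Each \(v_R^\pm\) is such a piece, because \(u-v_R^\pm\) is a sum of elements of \(Q\) and hence lies in \(Q\). By Lemma~\ref{lem:orthogonality-transport}, \(Rv_R^\pm\in\operatorname{tan}(v_R^\pm,Q)\subseteq\operatorname{tan}(u,Q)\). Since \(\operatorname{tan}(u,Q)\) is a linear subspace, the signed sum \(z\) also lies in it.

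\emph{Part (b).} Let \(z\in\operatorname{tan}(u,Q)\) and \(\gamma>0\). By Lemma~\ref{lem:orthogonality-transport}, we can write \(z=\sum_R\alpha_R Ru\). Set
\[
v_R^+=\gamma\max(\alpha_R,0)\,u,\qquad v_R^-=\gamma\max(-\alpha_R,0)\,u,\qquad w=u .
\]
These are nonnegative multiples of \(u\in Q\), so they all lie in \(Q\). We have \(v_R^+-v_R^-=\gamma\alpha_Ru\), so \(\sum_R R(v_R^+-v_R^-)=\gamma z\). Moreover,
\[
w+\sum_R(v_R^++v_R^-)=\Bigl(1+\gamma\sum_R|\alpha_R|\Bigr)u=Mu,
\]
with \(M=1+\gamma\sum_R|\alpha_R|>0\). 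Hence \(\gamma z\in\operatorname{tan}'(Mu,Q)\). The case \(u=0\) is covered automatically, since then \(z=0\).

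The only substantive point is the monotonicity step in (a). It is where self-duality of \(Q\) is essential: it guarantees that the exposed face \(Q\cap u^\perp\) is orthogonal to every conic summand of \(u\). Everything else follows directly from the definitions and Lemma~\ref{lem:orthogonality-transport}.
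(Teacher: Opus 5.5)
Your proof is correct and follows essentially the same route as the paper. In (a), your monotonicity step \(Q\cap u^\perp\subseteq Q\cap v^\perp\) for a conic summand \(v\) of \(u\) is the same self-duality argument the paper applies directly to a test vector \(h\in Q\cap u^\perp\). In (b), your choice \(w=u\), \(M=1+\gamma\sum_R|\alpha_R|\) is a particular instance of the paper's \(w=(M-\gamma\sum_R|\alpha_R|)u\) with any \(M\geq\gamma\sum_R|\alpha_R|\), \(M>0\).
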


\begin{proof}
For part (a), take \(z\in\operatorname{tan}'(u,Q)\) and
\(h\in Q\cap u^\perp\).  The first line of
\eqref{eq:lifted-block-tangent} gives
\[
 0=\langle u,h\rangle
  =\langle w,h\rangle
   +\sum_{R\in\mathcal R}
     \bigl(\langle v_R^+,h\rangle+\langle v_R^-,h\rangle\bigr).
\]
Every term on the right is nonnegative because all displayed vectors lie in
the self-dual cone \(Q\).  Hence
\(\langle v_R^+,h\rangle=\langle v_R^-,h\rangle=0\) for every
\(R\in\mathcal R\).  Lemma~\ref{lem:orthogonality-transport}, together with
the scalar case in
\eqref{eq:scalar-tangent-space}, gives
\[
 Rv_R^+\in\operatorname{tan}(v_R^+,Q)
       =\bigl(Q\cap(v_R^+)^\perp\bigr)^\perp.
\]
Since \(h\in Q\cap(v_R^+)^\perp\), the definition of the orthogonal
complement gives \(\langle Rv_R^+,h\rangle=0\).  Similarly,
\(h\in Q\cap(v_R^-)^\perp\), and the same lemma gives
\(\langle Rv_R^-,h\rangle=0\).  Taking the inner product of the second line
of \eqref{eq:lifted-block-tangent} with \(h\) now gives
\[
 \langle z,h\rangle
 =\sum_{R\in\mathcal R}
   \bigl(\langle Rv_R^+,h\rangle-\langle Rv_R^-,h\rangle\bigr)
 =0.
\]

For part (b), Lemma~\ref{lem:orthogonality-transport}, together with
\eqref{eq:scalar-tangent-space}, gives
coefficients \(\alpha_R\in\mathbb R\) such that
\(z=\sum_R\alpha_RRu\).  Write
\((\alpha)_+:=\max\{\alpha,0\}\) and
\((\alpha)_-:=\max\{-\alpha,0\}\), so that
\(\alpha=(\alpha)_+-(\alpha)_-\) and
\(|\alpha|=(\alpha)_++(\alpha)_-\).  Choose
\(M\geq\gamma\sum_R|\alpha_R|\), with \(M>0\), and set
\[
 v_R^+=\gamma(\alpha_R)_+u,\qquad
 v_R^-=\gamma(\alpha_R)_-u,\qquad
 w=\left(M-\gamma\sum_R|\alpha_R|\right)u.
\]
Since \(\gamma>0\) and both \((\alpha_R)_+\) and \((\alpha_R)_-\) are
nonnegative, the vectors \(v_R^+\) and \(v_R^-\) lie in \(Q\).  The choice
of \(M\) makes the coefficient of \(u\) in \(w\) nonnegative, so
\(w\in Q\).  Moreover,
\[
\begin{aligned}
 w+\sum_R(v_R^++v_R^-)
 &=\left(M-\gamma\sum_R|\alpha_R|\right)u
   +\gamma\sum_R\bigl((\alpha_R)_++(\alpha_R)_-\bigr)u
  =Mu,\\
 \sum_R R(v_R^+-v_R^-)
 &=\gamma\sum_R\bigl((\alpha_R)_+-(\alpha_R)_-\bigr)Ru
  =\gamma\sum_R\alpha_RRu
  =\gamma z.
\end{aligned}
\]
Thus the defining equations in \eqref{eq:lifted-block-tangent} hold with
\(u\) replaced by \(Mu\) and \(z\) replaced by \(\gamma z\).  Hence
\(\gamma z\in\operatorname{tan}'(Mu,Q)\).
\end{proof}

\paragraph{Product assembly.}
Return to the product cone in \eqref{eq:intro-primal} and write
\[
 K=K_1\times\cdots\times K_\nu,
\]
where every block is either \(\mathbb R_+\) or a Lorentz cone
\(\mathcal L_{n_i}\), and put \(n_i=1\) for a scalar block.  Apply
\eqref{eq:fixed-map-family} in the coordinates of each block \(K_i\), and
denote the resulting fixed map family by \(\mathcal R_i\).

For \(u=(u_1,\ldots,u_\nu)\in K\), set
\begin{equation}
 \operatorname{tan}(u,K):=\prod_{i=1}^\nu\operatorname{tan}(u_i,K_i),\qquad
 \operatorname{tan}'(u,K):=\prod_{i=1}^\nu\operatorname{tan}'(u_i,K_i).
 \label{eq:product-tangent-relations}
\end{equation}
Because each \(K_i\) is self-dual,
\(\langle u_i,x_i\rangle\geq0\) for \(u_i,x_i\in K_i\).  Thus
\(x\in K\cap u^\perp\) if and only if
\(x_i\in K_i\cap u_i^\perp\) for every \(i\), so
\(\operatorname{tan}(u,K)=(K\cap u^\perp)^\perp\).

The product mapping has the following explicit blockwise SOCP representation.  For
\(z=(z_1,\ldots,z_\nu)\), the membership
\(z\in\operatorname{tan}'(u,K)\) holds if and only if, for every block
\(i\), there are \(w_i,v_{iR}^+,v_{iR}^-\in K_i\) satisfying
\begin{align}
 u_i
  &=w_i+\sum_{R\in\mathcal R_i}(v_{iR}^++v_{iR}^-),
 \label{eq:product-lift-source}\\
 z_i
  &=\sum_{R\in\mathcal R_i}R(v_{iR}^+-v_{iR}^-).
 \label{eq:product-lift-image}
\end{align}
These relations are affine, and every conic variable belongs to an original
cone block.

\begin{corollary}[Product containment and rescaling]
\label{cor:product-lifted-tangent}
For every \(u\in K\),
\begin{enumerate}[label=\textup{(\alph*)},leftmargin=2.2em]
\item \(\operatorname{tan}'(u,K)\subseteq\operatorname{tan}(u,K)\).
\item If \(z\in\operatorname{tan}(u,K)\) and \(\gamma>0\), then
\(\gamma z\in\operatorname{tan}'(Mu,K)\) for some \(M>0\).
\end{enumerate}
\end{corollary}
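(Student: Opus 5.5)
The plan is to reduce everything to Proposition~\ref{prop:lifted-tangent} block by block, using the product definitions in \eqref{eq:product-tangent-relations}. Both \(\operatorname{tan}(u,K)\) and \(\operatorname{tan}'(u,K)\) are defined as Cartesian products of their blockwise counterparts. Also, \(u\in K\) means \(u_i\in K_i\) for every \(i\). So each part should follow from the corresponding part of the proposition applied to \((K_i,\mathcal R_i)\). The only genuine point to address is that part (b) allows a block-dependent scalar \(M_i\), whereas the statement requires a single common \(M\).

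For part (a), take \(z=(z_1,\ldots,z_\nu)\in\operatorname{tan}'(u,K)\). By definition \(z_i\in\operatorname{tan}'(u_i,K_i)\) for every \(i\). Proposition~\ref{prop:lifted-tangent}(a), applied to the block \(K_i\) (which is either \(\mathbb R_+\) or \(\mathcal L_{n_i}\)), gives \(z_i\in\operatorname{tan}(u_i,K_i)\). Taking the product over \(i\) yields \(z\in\operatorname{tan}(u,K)\).

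For part (b), take \(z\in\operatorname{tan}(u,K)\) and \(\gamma>0\), so that \(z_i\in\operatorname{tan}(u_i,K_i)\) for each \(i\). Instead of invoking only the conclusion of Proposition~\ref{prop:lifted-tangent}(b), I will reuse its explicit construction, because that construction works for \emph{every} sufficiently large \(M\). By Lemma~\ref{lem:orthogonality-transport} and \eqref{eq:scalar-tangent-space}, write \(z_i=\sum_{R\in\mathcal R_i}\alpha_{iR}Ru_i\). Then set
\[
 M:=\max\Bigl\{1,\ \max_i\gamma\sum_{R\in\mathcal R_i}|\alpha_{iR}|\Bigr\}>0 .
\]
In each block, define \(v_{iR}^\pm=\gamma(\alpha_{iR})_\pm u_i\) and \(w_i=\bigl(M-\gamma\sum_R|\alpha_{iR}|\bigr)u_i\). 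These vectors lie in \(K_i\) because their coefficients of \(u_i\) are nonnegative. The same computation as in the proof of the proposition gives \eqref{eq:product-lift-source} with \(u_i\) replaced by \(Mu_i\), and \eqref{eq:product-lift-image} with \(z_i\) replaced by \(\gamma z_i\). Hence \(\gamma z_i\in\operatorname{tan}'(Mu_i,K_i)\) for every \(i\), that is, \(\gamma z\in\operatorname{tan}'(Mu,K)\).

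An equivalent route is to first note a monotonicity property. If \(\gamma z_i\in\operatorname{tan}'(M_iu_i,K_i)\) and \(M\ge M_i\), one can add \((M-M_i)u_i\in K_i\) to \(w_i\), which gives \(\gamma z_i\in\operatorname{tan}'(Mu_i,K_i)\); then take \(M=\max_iM_i\). Either way, the step needing care, and really the only obstacle, is obtaining this common scale. It is handled by the absorption of slack into the free cone variable \(w_i\).
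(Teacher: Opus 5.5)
Your proposal is correct and matches the paper's proof: part (a) is the same blockwise application of Proposition~\ref{prop:lifted-tangent}(a), and your ``equivalent route'' for (b), adding \((M-M_i)u_i\) to \(w_i\) and taking \(M=\max_iM_i\), is exactly how the paper obtains a common scale. Your primary route for (b) simply reruns the proposition's explicit construction with one uniform \(M\); it is equally valid and rests on the same idea of absorbing slack into \(w_i\).
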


\begin{proof}
Part (a) follows by applying Proposition~\ref{prop:lifted-tangent}(a) in
each block.  For part (b), apply Proposition~\ref{prop:lifted-tangent}(b)
in block \(i\) to obtain \(M_i>0\), and set
\(M=\max_{1\leq i\leq\nu}M_i\).  Define
\(w_i':=w_i+(M-M_i)u_i\).  Since \(M\geq M_i\), we have
\(w_i'\in K_i\).  Replacing \(w_i\) by \(w_i'\) changes the left-hand side
of \eqref{eq:product-lift-source} from \(M_iu_i\) to \(Mu_i\), while
\eqref{eq:product-lift-image} remains unchanged because it does not involve
\(w_i\).
\end{proof}

\subsection{An SOCP representation of facial-reduction sequences}
\label{sec:facial-reduction-encoding}

This subsection specializes the sequence-encoding and backward-rescaling
argument in Pataki's Corollary~3 to the mapping constructed above
\cite[Corollary~3 and equation~(5.32)]{Pataki2013}.  We include the argument
to fix the correspondence with the explicit SOCP variables used in the final
formulation.

We first derive two elementary cumulative identities for a facial-reduction
sequence that will be used in the SOCP representation.  Let
\(g_1,\ldots,g_\ell\) be vectors and \(F_0,\ldots,F_\ell\) be faces, with
\(F_0=K\), satisfying
\[
 g_j\in F_{j-1}^*,\qquad
 F_j=F_{j-1}\cap g_j^\perp
 \qquad(1\leq j\leq\ell).
\]
These relations hold for every facial-reduction sequence and remain valid if
zero directions are appended after the sequence reaches the minimal face.
By \eqref{eq:niceness}, choose
\begin{equation}
 g_j=a_j+r_j,\qquad
 a_j\in K,\qquad r_j\in F_{j-1}^\perp
 \qquad(1\leq j\leq\ell).
 \label{eq:fr-direction-decomposition}
\end{equation}
Set \(\bar a_0=\bar r_0=0\) and, for \(1\leq j\leq\ell\), define
\begin{equation}
 \bar a_j:=\sum_{k=1}^j a_k,
 \qquad
 \bar r_j:=\sum_{k=1}^j r_k.
 \label{eq:cumulative-parts}
\end{equation}
Then
\begin{equation}
 F_j=K\cap\bar a_j^\perp\quad(0\leq j\leq\ell),
 \qquad
 \bar r_j\in F_{j-1}^\perp\quad(1\leq j\leq\ell).
 \label{eq:cumulative-facial-reduction}
\end{equation}
Indeed, \(r_j\perp F_{j-1}\) gives
\(F_j=F_{j-1}\cap a_j^\perp\).  Iteration yields
\(F_j=K\cap a_1^\perp\cap\cdots\cap a_j^\perp\).  For \(x\in K\),
self-duality gives \(\langle a_k,x\rangle\geq0\) for every \(k\).  Hence
\(\langle\bar a_j,x\rangle=0\) if and only if
\(\langle a_k,x\rangle=0\) for every \(1\leq k\leq j\).  This proves the
first identity, \(F_j=K\cap\bar a_j^\perp\), in
\eqref{eq:cumulative-facial-reduction}.  Moreover, for \(k\leq j\), the inclusion
\(F_{j-1}\subseteq F_{k-1}\) implies
\(F_{k-1}^\perp\subseteq F_{j-1}^\perp\).  Hence
\[
 r_k\in F_{k-1}^\perp\subseteq F_{j-1}^\perp
 \qquad(k\leq j).
\]
Since \(F_{j-1}^\perp\) is a linear subspace, it follows that
\(\bar r_j=\sum_{k=1}^j r_k\in F_{j-1}^\perp\).  This proves the second
identity in \eqref{eq:cumulative-facial-reduction}.

Fix \(s\geq1\), apply the preceding identities with \(\ell=s-1\), and let
\(g_s\in F_{s-1}^*\) denote a terminal vector, which will be the dual slack
in the application below.  We next encode these data through
\(\operatorname{tan}'\); equations~\eqref{eq:product-lift-source}--
\eqref{eq:product-lift-image} will then express the encoding as an SOCP.  For
each \(1\leq j\leq s\), introduce
\(y_j\in\mathbb R^n\), \(q_j\in K\), and \(z_j\in\mathbb R^n\).  Set
\(q_0=0\) and \(z_1=0\), and impose
\begin{equation}
 y_j=q_j+z_j\quad(1\leq j\leq s),
 \qquad z_j\in\operatorname{tan}'(q_{j-1},K)\quad(2\leq j\leq s).
 \label{eq:adjacent-chain-relation}
\end{equation}
\begin{samepage}
When \eqref{eq:adjacent-chain-relation} represents a facial-reduction
sequence, the three vectors have the following roles for \(1\leq j<s\).
\begin{enumerate}[leftmargin=2.2em]
\item \(y_j\in F_{j-1}^*\) is the reducing direction used by the represented
sequence.
\item \(q_j\in K\) is its cone representative.
\item \(z_j\in F_{j-1}^\perp\) is the correction satisfying
\(y_j=q_j+z_j\).
\end{enumerate}
\end{samepage}
Since \(z_j\perp F_{j-1}\), the vectors \(y_j\) and \(q_j\) agree on
\(F_{j-1}\) and expose the same next face.  More precisely,
Lemma~\ref{lem:adjacent-lifted-tangent-chain}(b) shows that the representation
can be chosen, for suitable scalars \(M_1,\ldots,M_{s-1}>0\), so that
\[
 y_j=M_j\sum_{k=1}^j g_k,\qquad
 q_j=M_j\bar a_j,\qquad z_j=M_j\bar r_j
 \qquad(1\leq j<s).
\]
Thus \(y_j\) is generally not the original direction \(g_j\), but a positive
multiple of the cumulative reducing direction through step \(j\).  At
\(j=s\), the same decomposition represents the terminal dual slack:
\(y_s=g_s\), \(q_s=a_s\), and \(z_s=r_s\).  Table~\ref{tab:facial-chain-encoding}
summarizes this correspondence.

\begin{table}[htbp]
\centering
\small
\begin{tabularx}{\textwidth}{@{}lYY@{}}
\toprule
Role & Ordinary facial-reduction data & SOCP representation \\
\midrule
Cumulative reducing vector, \(j<s\)
 & \(\sum_{k=1}^j g_k=\bar a_j+\bar r_j\)
 & \(y_j=q_j+z_j=M_j\sum_{k=1}^j g_k\) \\
Cumulative cone part, \(j<s\)
 & \(\bar a_j=\sum_{k=1}^j a_k\)
 & \(q_j=M_j\bar a_j\) \\
Cumulative tangent part, \(j<s\)
 & \(\bar r_j=\sum_{k=1}^j r_k\)
 & \(z_j=M_j\bar r_j\) \\
Face after step \(j<s\)
 & \(F_j\)
 & \(F_j=K\cap q_j^\perp\) \\
Terminal dual slack
 & \(g_s=a_s+r_s\in F_{s-1}^*\)
 & \(y_s=g_s,\ q_s=a_s,\ z_s=r_s\) \\
\bottomrule
\end{tabularx}
\caption{The ordinary facial-reduction data and their SOCP representation in
Lemma~\ref{lem:adjacent-lifted-tangent-chain}(b).}
\label{tab:facial-chain-encoding}
\end{table}

Expand each condition \(z_j\in\operatorname{tan}'(q_{j-1},K)\) using the
blockwise constraints
\eqref{eq:product-lift-source}--\eqref{eq:product-lift-image},
using, for each \(2\leq j\leq s\), separate auxiliary variables
\[
 w_{ji},v_{jiR}^+,v_{jiR}^-\in K_i.
\]
The
resulting system contains only affine equations and memberships in the
original cone blocks.

The next lemma is a self-contained reformulation, in the present notation, of
the two consequences of Pataki's argument needed below.  Part~(a) establishes
the nonnegativity implication needed for weak duality.  Part~(b) shows that the
constraints represent the directions in a facial-reduction sequence,
including after zero directions are appended.

\begin{lemma}[SOCP representation of facial-reduction sequences]
\label{lem:adjacent-lifted-tangent-chain}
The variables above have the following properties.
\begin{enumerate}[label=\textup{(\alph*)},leftmargin=2.2em]
\item If \(x\in K\) and \(\langle y_j,x\rangle=0\) for
\(1\leq j<s\), then
\[
 \langle q_j,x\rangle=0\quad(1\leq j<s),
 \qquad \langle y_s,x\rangle\geq0.
\]
\item Let \(F_0=K\), and suppose that, for \(1\leq j<s\),
\[
 g_j\in F_{j-1}^*,\qquad
 F_j=F_{j-1}\cap g_j^\perp.
\]
For every \(g_s\in F_{s-1}^*\), the lifted variables can be
chosen so that, for some \(M_1,\ldots,M_{s-1}>0\),
\begin{equation}
 y_j=M_j\sum_{k=1}^j g_k\quad(1\leq j<s),
 \qquad y_s=g_s.
 \label{eq:facial-chain-encoding}
\end{equation}
\end{enumerate}
\end{lemma}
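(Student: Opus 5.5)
For part~(a), I would argue by induction on \(j\) that \(\langle q_j,x\rangle=0\) for \(1\le j<s\).

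\emph{Base case.} For \(j=1\) we have \(z_1=0\), so \(y_1=q_1\) and \(\langle q_1,x\rangle=\langle y_1,x\rangle=0\).

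\emph{Inductive step.} Assume \(\langle q_{j-1},x\rangle=0\) for some \(2\le j<s\). Then \(x\in K\cap q_{j-1}^\perp\). Since \(z_j\in\operatorname{tan}'(q_{j-1},K)\), Corollary~\ref{cor:product-lifted-tangent}(a) gives \(z_j\in\operatorname{tan}(q_{j-1},K)=(K\cap q_{j-1}^\perp)^\perp\), so \(\langle z_j,x\rangle=0\). Hence \(\langle q_j,x\rangle=\langle y_j,x\rangle-\langle z_j,x\rangle=0\).

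\emph{Terminal index.} The same reasoning at \(j=s\) gives \(\langle z_s,x\rangle=0\); when \(s=1\) this holds trivially because \(z_1=0\). Since \(q_s\in K\), \(x\in K\), and \(K\) is self-dual, \(\langle y_s,x\rangle=\langle q_s,x\rangle\ge0\). Note that \(q_0=0\) makes the case \(j=1\) of the induction consistent with the general step.

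For part~(b), I would use the decomposition \eqref{eq:fr-direction-decomposition} for \(j<s\), obtained from \eqref{eq:niceness}, together with the cumulative identities \eqref{eq:cumulative-facial-reduction}. I would also decompose the terminal vector by niceness as \(g_s=a_s+r_s\) with \(a_s\in K\) and \(r_s\in F_{s-1}^\perp\).

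The key observation is that \(F_{j-1}=K\cap\bar a_{j-1}^\perp\). Since \(\operatorname{tan}(u,K)=(K\cap u^\perp)^\perp\) and \(\operatorname{tan}\) is invariant under positive scaling of \(u\), we get
\[
\operatorname{tan}(M\bar a_{j-1},K)=F_{j-1}^\perp\quad\text{for every } M>0.
\]
In particular \(\bar r_j\in\operatorname{tan}(\bar a_{j-1},K)\) and \(r_s\in\operatorname{tan}(\bar a_{s-1},K)\).

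I would choose the scalars \(M_j\) by \emph{backward} rescaling, starting at the terminal index.

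\emph{Terminal step.} Set \(q_s=a_s\) and \(z_s=r_s\). Apply Corollary~\ref{cor:product-lifted-tangent}(b) with \(u=\bar a_{s-1}\), \(z=r_s\), and \(\gamma=1\). This yields \(M_{s-1}>0\) such that \(r_s\in\operatorname{tan}'(M_{s-1}\bar a_{s-1},K)\). Set \(q_{s-1}=M_{s-1}\bar a_{s-1}\).

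\emph{Backward step.} Suppose \(M_j\) has been fixed for some \(2\le j<s\). Set \(z_j=M_j\bar r_j\). Since \(\bar r_j\in F_{j-1}^\perp=\operatorname{tan}(\bar a_{j-1},K)\), apply Corollary~\ref{cor:product-lifted-tangent}(b) with \(u=\bar a_{j-1}\), \(z=\bar r_j\), and \(\gamma=M_j\). This yields \(M_{j-1}>0\) such that \(M_j\bar r_j\in\operatorname{tan}'(M_{j-1}\bar a_{j-1},K)\). Set \(q_{j-1}=M_{j-1}\bar a_{j-1}\).

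\emph{Checking the relations.} At \(j=1\), \(\bar r_1=r_1\in F_0^\perp=K^\perp=\{0\}\), so \(z_1=M_1\bar r_1=0\), as required. Each \(q_j\) is a positive multiple of \(\bar a_j\in K\), so \(q_j\in K\). Then \(y_j=q_j+z_j=M_j(\bar a_j+\bar r_j)=M_j\sum_{k\le j}g_k\) for \(j<s\), and \(y_s=a_s+r_s=g_s\). Every membership \(z_j\in\operatorname{tan}'(q_{j-1},K)\) for \(2\le j\le s\) holds by construction, and the auxiliary block variables are those supplied by the corollary. The degenerate case \(s=1\) needs only \(q_1=a_1\) and \(z_1=0\); this is valid because \(F_0^\perp=\{0\}\) forces \(r_1=0\).

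The main obstacle is the ordering of the rescaling. The constant \(M_{j-1}\) produced by Corollary~\ref{cor:product-lifted-tangent}(b) depends on the magnitude \(\gamma=M_j\) of the tangent vector it must represent. The construction therefore has to proceed from \(j=s\) down to \(j=1\), with each \(q_{j-1}\) fixed only after \(z_j\) is known. A forward construction would force the \(M_j\) to satisfy conflicting constraints.
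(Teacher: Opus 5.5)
Your proof is correct and follows the paper's argument. In part (a) you propagate $\langle q_j,x\rangle=0$ where the paper propagates $\langle z_j,x\rangle=0$, which makes no difference; part (b) uses the same niceness decomposition, the identification $\bar r_j\in F_{j-1}^\perp=\operatorname{tan}(\bar a_{j-1},K)$, and backward rescaling from $M_{s-1}$ down to $M_1$. Only your closing remark is off: the conditions merely require each earlier multiplier to be large relative to the next, so a forward choice would not be contradictory, only less convenient.
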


\begin{proof}
For part (a), we prove by induction that
\(\langle z_j,x\rangle=0\) for \(1\leq j\leq s\).  The initial case follows
from \(z_1=0\).  Suppose that \(\langle z_j,x\rangle=0\) for some \(j<s\).
Since \(\langle y_j,x\rangle=0\) and \(y_j=q_j+z_j\), we obtain
\(\langle q_j,x\rangle=0\), and hence \(x\in K\cap q_j^\perp\).  On the
other hand, \eqref{eq:adjacent-chain-relation} and Corollary
\ref{cor:product-lifted-tangent}(a) give
\[
 z_{j+1}\in\operatorname{tan}'(q_j,K)
 \subseteq(K\cap q_j^\perp)^\perp.
\]
Therefore \(\langle z_{j+1},x\rangle=0\), completing the induction.  The
argument also proves \(\langle q_j,x\rangle=0\) for every \(j<s\).  Finally,
\(\langle z_s,x\rangle=0\), and therefore
\(\langle y_s,x\rangle=\langle q_s,x\rangle\geq0\).

For part (b), use \eqref{eq:fr-direction-decomposition} to decompose
\(g_1,\ldots,g_{s-1}\).  Separately, since \(g_s\in F_{s-1}^*\),
\eqref{eq:niceness} gives
\[
 g_s=a_s+r_s,\qquad a_s\in K,\qquad r_s\in F_{s-1}^\perp.
\]
Define \(\bar a_j\) and \(\bar r_j\) by
\eqref{eq:cumulative-parts}.  Applying
\eqref{eq:cumulative-facial-reduction} to
\(g_1,\ldots,g_{s-1}\) gives
\(F_j=K\cap\bar a_j^\perp\) and
\(\bar r_j\in F_{j-1}^\perp\).  Therefore
\[
 \bar r_j\in F_{j-1}^\perp
 =\operatorname{tan}(\bar a_{j-1},K),
 \qquad 1\leq j<s,
\]
and the terminal decomposition similarly gives
\[
 r_s\in\operatorname{tan}(\bar a_{s-1},K).
\]

If \(s=1\), then \(r_1\in F_0^\perp=K^\perp=\{0\}\), so setting
\(q_1=a_1\), \(z_1=0\), and \(y_1=g_1\) proves the result.  Suppose
therefore that \(s\geq2\).  Apply Corollary
\ref{cor:product-lifted-tangent}(b) first to
\(r_s\in\operatorname{tan}(\bar a_{s-1},K)\), with prescribed multiplier
one.  It supplies \(M_{s-1}>0\) and auxiliary variables certifying
\[
 z_s:=r_s\in\operatorname{tan}'(M_{s-1}\bar a_{s-1},K).
\]
Next, for \(j=s-1,s-2,\ldots,2\), apply the same corollary to
\(\bar r_j\in\operatorname{tan}(\bar a_{j-1},K)\), with prescribed
multiplier \(M_j\).  It supplies \(M_{j-1}>0\) and auxiliary variables
certifying
\[
 z_j:=M_j\bar r_j
 \in\operatorname{tan}'(M_{j-1}\bar a_{j-1},K).
\]
After completing this backward recursion, set
\[
 q_j=M_j\bar a_j,\qquad z_j=M_j\bar r_j
 \quad(1\leq j<s),
 \qquad q_s=a_s,\quad z_s=r_s,
\]
and define \(y_j=q_j+z_j\).  Since
\(\bar r_1\in F_0^\perp=K^\perp=\{0\}\), one has \(z_1=0\).  The displayed
certificates are therefore exactly the memberships in
\eqref{eq:adjacent-chain-relation}.  Finally,
\(\bar a_j+\bar r_j=\sum_{k=1}^j g_k\) for \(j<s\), while
\(a_s+r_s=g_s\), so \eqref{eq:facial-chain-encoding} follows.
\end{proof}

Figure~\ref{fig:three-stage-adjacent-chain} illustrates the construction in
Lemma~\ref{lem:adjacent-lifted-tangent-chain}(b) when \(s=3\).

\begin{figure}[htbp]
\centering
\begin{tikzpicture}[
 >=Latex,
 multiplier/.style={draw,rounded corners,minimum width=2.3cm,
   minimum height=7mm,font=\footnotesize},
 stage/.style={draw,rounded corners,align=center,text width=3.0cm,
   inner sep=3pt,font=\footnotesize},
 labeltext/.style={font=\scriptsize}
]
\node[font=\footnotesize\bfseries] at (0,0.8)
  {Multipliers chosen backward};
\node[multiplier] (m1) at (-4.3,0) {\(M_1\)};
\node[multiplier] (m2) at (0,0) {\(M_2\)};
\node[multiplier] (m3) at (4.3,0) {\(M_3=1\)};
\draw[->,thick] (m3) -- node[above,labeltext] {lift \(r_3\)} (m2);
\draw[->,thick] (m2) -- node[above,labeltext] {lift \(M_2\bar r_2\)} (m1);

\node[stage] (s1) at (-4.3,-2.55) {\textbf{Stage 1}\\
  \(y_1=M_1g_1\)\\ \(q_1=M_1\bar a_1\)\\ \(z_1=0\)};
\node[stage] (s2) at (0,-2.55) {\textbf{Stage 2}\\
  \(y_2=M_2(g_1+g_2)\)\\ \(q_2=M_2\bar a_2\)\\
  \(z_2=M_2\bar r_2\)};
\node[stage] (s3) at (4.3,-2.55) {\textbf{Stage 3}\\
  \(y_3=g_3\)\\ \(q_3=a_3\)\\ \(z_3=r_3\)};
\draw[->,dashed] (m1) -- node[right,labeltext] {defines} (s1);
\draw[->,dashed] (m2) -- node[right,labeltext] {defines} (s2);
\draw[->,dashed] (m3) -- node[right,labeltext] {defines} (s3);
\draw[thick] (s1.south east) to[bend right=17]
  node[below,labeltext] {\(z_2\in\operatorname{tan}'(q_1,K)\)}
  (s2.south west);
\draw[thick] (s2.south east) to[bend right=17]
  node[below,labeltext] {\(z_3\in\operatorname{tan}'(q_2,K)\)}
  (s3.south west);
\end{tikzpicture}
\caption{Three-stage instance of the construction in
Lemma~\ref{lem:adjacent-lifted-tangent-chain}(b).  Bars denote cumulative
cone and tangent parts for \(j<3\); the final box carries the unscaled terminal
slack.}
\label{fig:three-stage-adjacent-chain}
\end{figure}

\subsection{An exact dual in SOCP form}
\label{sec:direct-exact-dual}

Combining the SOCP-representable mapping above with Pataki's extended-duality
result \cite[Corollary~3]{Pataki2013} yields the following explicit dual.  We
include a self-contained proof to verify the displayed formulation and its
polynomial size.

Put \(T=\rank K\).  Apply the constraints in
\eqref{eq:adjacent-chain-relation} to the program \((P)\) in
\eqref{eq:intro-primal} with \(s=T+1\).
The indices \(1,\ldots,T\) encode the cumulative reducing directions, while
index \(T+1\) carries the terminal dual slack.  With
\(\eta,\lambda_j\in\R^m\), define
\begin{equation}
\begin{aligned}
 (D_{\mathrm{SOCP}})\qquad
 \sup\quad &b^T\eta\\
 \text{subject to}\quad
 &y_j=A^T\lambda_j,\quad b^T\lambda_j=0
       &&(1\leq j\leq T),\\
 &y_{T+1}=c-A^T\eta,\\
 &y_j=q_j+z_j,\quad q_j\in K
       &&(1\leq j\leq T+1),\\
 &z_1=0,\quad z_j\in\operatorname{tan}'(q_{j-1},K)
       &&(2\leq j\leq T+1).
\end{aligned}
 \label{eq:direct-exact-dual}
\end{equation}
Each membership in the last line is expanded through a fresh
copy of
\eqref{eq:product-lift-source}--\eqref{eq:product-lift-image}; this explicit
lift, rather than the membership notation, makes the formulation an SOCP.
Its primary variables are
\(\eta,\lambda_j,q_j,w_{ji},v_{jiR}^+,v_{jiR}^-\); the symbols
\(y_j,z_j\) are affine abbreviations.  Free scalars may be split into
differences of nonnegative variables.

We measure scalar formulation size by the total number of scalar
decision-variable components, scalar affine equations, and coefficient
occurrences.  A variable in \(\mathcal L_d\) contributes \(d\) scalar
components, and each real coefficient is treated as one atomic scalar.

\begin{theorem}[Exact dual in SOCP form]
\label{thm:direct-exact-dual}
Every feasible point of \((D_{\mathrm{SOCP}})\) gives a lower bound on
\((P)\).  If \((P)\) is feasible and \(p^*>-\infty\), then
\((D_{\mathrm{SOCP}})\) attains \(p^*\), without a constraint qualification.
The scalar formulation size of \((D_{\mathrm{SOCP}})\) is
\begin{equation}
 O\!\left(T\sum_{i=1}^\nu n_i^3
   +T\,\operatorname{size}(A,b,c)\right),
 \label{eq:lifted-tangent-polynomial-size}
\end{equation}
where \(\operatorname{size}(A,b,c)\)
denotes the number of scalar coefficient occurrences of the input data in the
displayed primal formulation.
\end{theorem}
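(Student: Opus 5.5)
The proof splits into three parts: weak duality, attainment, and a size count.

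\emph{Weak duality.} Take any primal feasible \(x\) and any feasible point of \((D_{\mathrm{SOCP}})\). For \(1\leq j\leq T\) we have
\[
\langle y_j,x\rangle=\lambda_j^TAx=\lambda_j^Tb=0 .
\]
Apply Lemma~\ref{lem:adjacent-lifted-tangent-chain}(a) with \(s=T+1\). It gives \(\langle y_{T+1},x\rangle\geq0\), that is,
\[
\langle c,x\rangle-\eta^Tb=\langle c-A^T\eta,x\rangle\geq0 .
\]
So \(b^T\eta\leq\langle c,x\rangle\) for every feasible \(x\), and hence \(b^T\eta\leq p^*\). When \((P)\) is infeasible the bound holds vacuously.

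\emph{Attainment.} Assume \((P)\) is feasible with finite \(p^*\). I would run facial reduction on the homogenized data: let \(L=\{x:Ax\in\R b\}\), or work directly with the affine feasible set. The plan is to choose directions \(g_j=A^T\lambda_j\) with \(b^T\lambda_j=0\), so that \(g_j\in F_{j-1}^*\) and \(g_j\perp\{x:Ax=b\}\). After at most \(T\) steps this reaches the minimal face \(F_{\min}\) containing the feasible set. Because each step strictly decreases the Jordan rank, the sequence has length at most \(T\); if it is shorter, pad it with zero directions (\(\lambda_j=0\)) up to length exactly \(T\).

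On \(F_{\min}\) the feasible set contains a relative-interior point of \(F_{\min}\), so Slater's condition holds for
\[
\inf\{\langle c,x\rangle:Ax=b,\ x\in F_{\min}\}.
\]
Standard conic duality over \(F_{\min}\) then gives \(\eta\) with \(c-A^T\eta\in F_{\min}^*\) and \(b^T\eta=p^*\). This is the step I expect to need the most care. One has to verify that the minimal face is reachable using only directions of the form \(A^T\lambda\) with \(b^T\lambda=0\); by the standard facial reduction theorem, an exposing direction for a non-Slater affine slice has this form after the usual separation argument. One also has to confirm that the resulting dual is attained; closedness holds because \(F_{\min}\) is nice and Slater holds relative to it.

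Now set \(g_{T+1}=c-A^T\eta\in F_T^*\). Lemma~\ref{lem:adjacent-lifted-tangent-chain}(b) produces lifted variables with \(y_j=M_j\sum_{k\leq j}g_k\) and \(y_{T+1}=g_{T+1}\). Then
\[
y_j=A^T\Bigl(M_j\sum_{k\leq j}\lambda_k\Bigr),
\]
and the new multipliers still satisfy \(b^T(\cdot)=0\). This gives a feasible point of \((D_{\mathrm{SOCP}})\) with value \(p^*\). Combined with weak duality, the value \(p^*\) is attained.

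\emph{Size.} For a Lorentz block \(K_i\), \(|\mathcal R_i|=1+\binom{n_i}{2}=O(n_i^2)\). Each copy of \eqref{eq:product-lift-source}--\eqref{eq:product-lift-image} therefore uses \(O(n_i^2)\) variables in \(K_i\), each with \(n_i\) components. That gives \(O(n_i^3)\) scalar components and \(O(n_i^3)\) coefficient occurrences, since each \(S_{ab}J\) has only two nonzeros. There are \(T\) such copies, one per index \(j\geq2\), and each \(j\) contributes \(O(\operatorname{size}(A,b,c))\) for the equations \(y_j=A^T\lambda_j\), \(b^T\lambda_j=0\) and \(y_{T+1}=c-A^T\eta\). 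Summing these yields \eqref{eq:lifted-tangent-polynomial-size}.
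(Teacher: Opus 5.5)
Your proposal is correct and follows the paper's proof essentially step for step. It uses the same three parts: weak duality via Lemma~\ref{lem:adjacent-lifted-tangent-chain}(a) with \(s=T+1\); attainment through a facial-reduction sequence with directions \(A^T\theta_j\), \(b^T\theta_j=0\) (the paper cites Pataki's Theorem~1 for exactly the reachability step you flag), zero-padding to length \(T\), Slater duality on the minimal face, and Lemma~\ref{lem:adjacent-lifted-tangent-chain}(b) with multipliers \(M_j\sum_{k\le j}\lambda_k\); and the same \(O(n_i^3)\)-per-block size count.

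One small correction: dual attainment over \(F_{\min}\) follows from the relative Slater point and the finite value alone. Niceness is used elsewhere, inside Lemma~\ref{lem:adjacent-lifted-tangent-chain}(b), to split the terminal slack as \(a_{T+1}+r_{T+1}\).
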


\begin{proof}
For weak duality, let a point of \((D_{\mathrm{SOCP}})\) be feasible.  If
\(x\) is primal feasible, then for \(j\leq T\),
\[
 \langle y_j,x\rangle
 =\langle A^T\lambda_j,x\rangle=b^T\lambda_j=0.
\]
Lemma~\ref{lem:adjacent-lifted-tangent-chain}(a), with \(s=T+1\), therefore
gives \(\langle y_{T+1},x\rangle\geq0\).  Hence
\[
 \langle c,x\rangle-b^T\eta
 =\langle c-A^T\eta,x\rangle
 =\langle y_{T+1},x\rangle\geq0.
\]

For attainment, Pataki's facial-reduction algorithm
\cite[Section~3, especially Theorem~1]{Pataki2013} supplies a sequence of
length \(\ell\leq T\), starting from \(F_0=K\), such that
\[
 g_j=A^T\theta_j\in F_{j-1}^*,
 \qquad b^T\theta_j=0,
 \qquad F_j=F_{j-1}\cap g_j^\perp
 \qquad(1\leq j\leq\ell),
\]
where \(F_\ell\) is the minimal face containing the primal feasible set.  To
fill the fixed \(T\) positions in \((D_{\mathrm{SOCP}})\), set
\(\theta_j=0\), \(g_j=0\), and \(F_j=F_\ell\) for \(\ell<j\leq T\).  This
padded list satisfies the displayed relations for \(1\leq j\leq T\), with
\(F_T=F_\ell\).  The feasible set meets \(\operatorname{ri}F_T\), so Slater
strong duality for the problem over the minimal face
\cite[p.~615]{Pataki2013} gives
\(\eta^*\) and the terminal slack
\[
 g_{T+1}=c-A^T\eta^*\in F_T^*,
 \qquad b^T\eta^*=p^*.
\]
Lemma~\ref{lem:adjacent-lifted-tangent-chain}(b), with \(s=T+1\), now
supplies variables satisfying \eqref{eq:adjacent-chain-relation} and positive
multipliers such that
\[
 y_j=M_j\sum_{k=1}^j g_k\quad(j\leq T),
 \qquad y_{T+1}=g_{T+1}.
\]
Taking \(\lambda_j=M_j\sum_{k=1}^j\theta_k\) and
\(\eta=\eta^*\) gives a feasible point of
\((D_{\mathrm{SOCP}})\) with value \(p^*\).

Finally, \(|\mathcal R_i|=1\) on a scalar block and
\(1+\binom{n_i}{2}\) on a Lorentz block.  Each of the \(T\) lifted tangent
constraints therefore uses
\(O(\sum_{i=1}^\nu n_i^3)\) scalar conic coordinates.  Adding the cone
vectors \(q_j\) and the affine equations containing the input data gives
\eqref{eq:lifted-tangent-polynomial-size}.
\end{proof}

\paragraph{Encoding consequence.}
The construction only copies the input coefficients and otherwise uses
\(0,1,-1\).  Consequently, for rational input data, the constructed
formulation also has polynomial binary encoding length.  This does not imply
polynomial-time solvability or a polynomial bound on the bit length of an
optimal rational certificate.

\begin{remark}[The Slater case]
If the primal meets \(\operatorname{ri}K\), no facial-reduction step is
needed, and the fixed positions in \((D_{\mathrm{SOCP}})\) may all be filled
with zero vectors.  The terminal slack condition is then simply
\(c-A^T\eta\in K\), so \((D_{\mathrm{SOCP}})\) reduces to the ordinary SOCP
dual.
\end{remark}

\subsection{Infeasibility as a consequence of exact duality}
\label{sec:infeasibility-certificates}

We now apply a standard homogenization consequence of exact duality.  Ramana's
SDP exact dual already yields a polynomial-size exact Farkas lemma
\cite{Ramana1997}, and Liu and Pataki use essentially the homogenized program
below \cite[proof of Theorem~4]{LiuPataki2018}.  The argument itself is
general.  What is specific here is that Theorem
\ref{thm:direct-exact-dual} makes the resulting certificate system a
polynomial-size SOCP.

This consequence is especially relevant under weak infeasibility, when
ordinary strict separation does not provide an attained normalized
certificate.  We begin with an affine SOCP feasibility system
\begin{equation}
 \Sigma=\{x\in\R^d:D_i x+d_i\in\Lcone_{n_i}\ (1\leq i\leq \nu),
 Ex=f\}.
 \label{eq:affine-socp}
\end{equation}
Write the free vector as \(x=x^+-x^-\), where
\(x^+,x^-\in\R_+^d\).  Introduce \(\tau\in\R_+\) and
\(s_i\in\Lcone_{n_i}\), and set
\begin{equation}
 h=(\tau,x^+,x^-,s_1,\ldots,s_\nu)\in C,
 \qquad
 C:=\R_+\times\R_+^d\times\R_+^d
       \times\prod_{i=1}^\nu\Lcone_{n_i}.
 \label{eq:homogeneous-product-cone}
\end{equation}
Write \(h_\tau=\tau\) for the first coordinate of \(h\).  The homogenized
affine equations are
\[
 s_i=D_i(x^+-x^-)+\tau d_i,\qquad
 E(x^+-x^-)=\tau f.
\]
The nonnegative scalar factors can be represented using Lorentz-cone
constraints, since \(t\geq0\) if and only if \((t,0)\in\Lcone_2\).
Let \(B\) collect the homogeneous equations and set \(L=\ker B\).  The
homogenized feasible cone is
\begin{equation}
 H_\Sigma=L\cap C.
 \label{eq:homogeneous-pair}
\end{equation}
Scaling gives the equivalence
\begin{equation}
 \Sigma\ne\varnothing
 \quad\Longleftrightarrow\quad
 H_\Sigma\text{ contains a point with }\tau>0.
 \label{eq:positive-tau-test}
\end{equation}
Thus \(\Sigma\) is infeasible exactly when \(\tau=0\) throughout
\(H_\Sigma\).

Consider the always-feasible homogeneous program
\begin{equation}
 (P_\Sigma)\qquad
 \inf\{-h_\tau:Bh=0,\ h\in C\},
 \label{eq:homogeneous-certificate-primal}
\end{equation}
Let \(e_\tau\) be the coordinate vector corresponding to \(\tau\).  Apply
\((D_{\mathrm{SOCP}})\) to \((P_\Sigma)\) by taking
\(A=B\), \(b=0\), and \(c=-e_\tau\), and denote the resulting affine SOCP
feasibility system by \(\mathfrak N(\Sigma)\).

\begin{corollary}[Infeasibility within SOCP]
\label{cor:pure-socp-infeasibility-certificate}
The transformation above constructs a polynomial-size SOCP system
\(\mathfrak N(\Sigma)\) that is feasible if and only if
\begin{equation}
 h_\tau=0\qquad\text{for every }h\in H_\Sigma,
 \label{eq:no-positive-tau}
\end{equation}
equivalently, if and only if \(\Sigma\) is infeasible.  The transformation
applies to arbitrary real input data and uses only fixed rational constants
in addition to those data.
\end{corollary}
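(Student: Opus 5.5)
The plan is to read $\mathfrak N(\Sigma)$ as the feasibility system of $(D_{\mathrm{SOCP}})$ for $(P_\Sigma)$, together with the extra affine equation asserting that the dual objective equals $0$. Since $b=0$, the objective $b^T\eta$ is identically zero. Hence the feasibility system of $(D_{\mathrm{SOCP}})$ with $A=B$, $b=0$, $c=-e_\tau$ is already the right system, and the constraints $b^T\lambda_j=0$ become vacuous.

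First I would record that $(P_\Sigma)$ is always feasible, since $h=0$ works, and that its value is
\[
 p^*_\Sigma=\inf\{-h_\tau:h\in H_\Sigma\}\in\{0,-\infty\},
\]
because $H_\Sigma$ is a cone. Indeed, if some $h\in H_\Sigma$ has $h_\tau>0$, scaling drives $-h_\tau$ to $-\infty$; otherwise every feasible value is $0$. Hence $p^*_\Sigma=0$ exactly when \eqref{eq:no-positive-tau} holds.

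For the direction ``\eqref{eq:no-positive-tau} $\Rightarrow$ feasible'', $(P_\Sigma)$ is feasible with finite value $0$. Theorem~\ref{thm:direct-exact-dual} then says that $(D_{\mathrm{SOCP}})$ attains $0$, and in particular is feasible. For the converse, any feasible point of $\mathfrak N(\Sigma)$ gives, by the weak-duality part of Theorem~\ref{thm:direct-exact-dual}, $-h_\tau\ge b^T\eta=0$ for every $h\in H_\Sigma$. Since $h_\tau\ge0$ on $C$, this forces $h_\tau=0$. The equivalence with infeasibility of $\Sigma$ is then \eqref{eq:positive-tau-test}.

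Finally, I would address size and data. The primal data $(B,0,-e_\tau)$ consist of copies of entries of $D_i,d_i,E,f$, of the constants $0,\pm1$ arising from the splitting $x=x^+-x^-$ and the slack identities, and of $-1$ in $c$. The cone $C$ has rank $O(d+\nu)$, and the scalar factors are written as $\Lcone_2$ slices. The bound \eqref{eq:lifted-tangent-polynomial-size}, with $T=\rank C\le 1+2d+2\nu$, is therefore polynomial in the input size. The construction adds only the fixed rational constants used in $\mathcal R_i$, namely entries of $S_{ab}J$ and $I$.

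The main subtle point is checking that the scalar blocks of $C$ are handled consistently with how Theorem~\ref{thm:direct-exact-dual} treats $K$. That theorem allows $\R_+$ blocks directly, with $\mathcal R=\{I_1\}$. The conversion to $\Lcone_2$ slices is therefore only a representational step at the end and does not affect the exactness argument. Nothing else appears delicate, since no constraint qualification is needed.
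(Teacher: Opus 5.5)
Your proposal is correct and follows essentially the same route as the paper. When \eqref{eq:no-positive-tau} holds, $(P_\Sigma)$ has value $0$, so Theorem~\ref{thm:direct-exact-dual} gives dual attainment and hence feasibility. Conversely, weak duality with $b=0$ forces $h_\tau\le 0$, hence $h_\tau=0$, on $H_\Sigma$; this is the direct form of the paper's contrapositive unboundedness argument. Your size count with $T=\rank C=1+2d+2\nu$, and your note that only the constants $0,\pm1$ from the homogenization and the maps $\mathcal R_i$ are added, match the paper's appeal to Theorem~\ref{thm:direct-exact-dual} and the explicit homogenization.
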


\begin{proof}
If \eqref{eq:no-positive-tau} holds, then
\((P_\Sigma)\) has the finite value zero.  Theorem
\ref{thm:direct-exact-dual} gives a feasible dual attaining zero.  If instead
some \(h\in H_\Sigma\) has \(h_\tau>0\), conic scaling makes
\((P_\Sigma)\) unbounded below.  Any feasible point of its dual would have
objective \(b^T\eta=0\) and, by weak duality, would give the impossible lower
bound \(-h_\tau\geq0\) for every \(h\in H_\Sigma\).  Thus the dual is
infeasible.  The equivalence with infeasibility of \(\Sigma\) is
\eqref{eq:positive-tau-test}.  Polynomial size and the coefficient claim
follow from Theorem~\ref{thm:direct-exact-dual} and the explicit
homogenization above.
\end{proof}

\begin{example}[A weakly infeasible SOCP]
\label{ex:worked-example}
Consider the SOCP feasibility system in the coordinates
\((x_1,x_2,x_3)\in\R^3\):
\begin{equation}
 (x_1,x_2,x_3)\in\Lcone_3,\qquad x_1=x_2,\qquad x_3=1.
 \label{eq:weak-example-affine}
\end{equation}
The two equalities define the affine line
\[
 \mathcal A=\{(x_1,x_2,x_3):x_1=x_2,\ x_3=1\}
     =\{(t,t,1):t\in\R\}.
\]
The system is infeasible because membership in \(\Lcone_3\) would require
\(t\geq\sqrt{t^2+1}\).  Nevertheless,
\[
 \operatorname{dist}\bigl((t,t,1),\Lcone_3\bigr)
 =\frac{\sqrt{t^2+1}-t}{\sqrt{2}}\longrightarrow0
 \qquad(t\to+\infty).
\]
Hence \(\mathcal A\) and \(\Lcone_3\) cannot be strictly separated, as
illustrated in Figure~\ref{fig:weak-infeasibility-geometry}, and the ordinary
conic alternative provides no infeasibility certificate.

\begin{figure}[H]
\centering
\begin{minipage}[b]{0.58\textwidth}
\centering
\begin{tikzpicture}
\begin{axis}[
 width=\linewidth,
 height=0.88\linewidth,
 view={42}{25},
 axis lines=center,
 xlabel={\(x_2\)},
 ylabel={\(x_3\)},
 zlabel={\(x_1\)},
 ylabel style={xshift=25pt,yshift=-5pt},
 xmin=-2.0,xmax=2.0,
 ymin=-1.7,ymax=1.7,
 zmin=0,zmax=2.0,
 ticks=none,
 clip=false
]
\addplot3[
 surf,
 shader=flat,
 draw=blue!32,
 fill=blue!25,
 opacity=0.20,
 domain=0:1.55,
 y domain=0:360,
 samples=10,
 samples y=33
] ({x*cos(y)},{x*sin(y)},{x});

\addplot3[
 red!80!black,
 densely dashed,
 line width=2pt,
 -{Latex[length=2.3mm]},
 domain=0:1.5,
 samples=2
] ({x},{1},{x});

\addplot3[
 red!80!black,
 only marks,
 mark=*,
 mark size=2pt
] coordinates {(0,1,0)};

\node[red!80!black,anchor=south east,font=\small,
      xshift=25pt,yshift=6pt]
  at (axis cs:1.25,1,1.25) {\(\mathcal A\)};
\node[blue!70!black,anchor=south east,font=\small]
  at (axis cs:-0.55,-0.25,1.15) {\(\Lcone_3\)};
\end{axis}
\end{tikzpicture}
\par\smallskip
\small (a) Three-dimensional view
\end{minipage}\hfill
\begin{minipage}[b]{0.39\textwidth}
\centering
\begin{tikzpicture}
\begin{axis}[
 width=\linewidth,
 height=0.97\linewidth,
 axis lines=middle,
 axis on top,
 xlabel={\(x_2\)},
 xmin=-0.25,xmax=4.25,
 ymin=-0.15,ymax=4.55,
 ticks=none,
 clip=false
]
\node[anchor=east] at (axis description cs:-0.04,0.5) {\(x_1\)};
\path[name path=slicetop]
  (axis cs:-0.25,4.55) -- (axis cs:4.25,4.55);
\addplot[
 name path=sliceboundary,
 draw=none,
 domain=-0.25:4.25,
 samples=80
] {sqrt(x^2+1)};
\addplot[blue!12]
  fill between[of=sliceboundary and slicetop];
\addplot[
 blue!75!black,
 solid,
 line width=1.5pt,
 domain=-0.25:4.1,
 samples=80
] {sqrt(x^2+1)};
\addplot[
 red!80!black,
 densely dashed,
 line width=1.7pt,
 -{Latex[length=2mm]},
 domain=0:4.05,
 samples=2
] {x};
\addplot[
 red!80!black,
 only marks,
 mark=*,
 mark size=1.7pt
] coordinates {(0,0)};
\node[red!80!black,anchor=north west,font=\scriptsize]
  at (axis cs:0,0) {\((0,0,1)\)};
\node[red!80!black,anchor=center,font=\small]
  at (axis cs:2.35,1.35) {\(x_1=x_2\)};
\node[blue!70!black,anchor=center,font=\small]
  at (axis cs:2.40,4.08) {\(x_1=\sqrt{x_2^2+1}\)};
\node[blue!60!black,align=center,font=\scriptsize]
  at (axis cs:1.40,3.20)
  {\(\Lcone_3\cap\{x_3=1\}\)};
\end{axis}
\end{tikzpicture}
\par\smallskip
\small (b) Slice \(x_3=1\)
\end{minipage}
\caption{Geometry of the weakly infeasible system
\eqref{eq:weak-example-affine}.  In the slice \(x_3=1\), the boundary of
\(\Lcone_3\) is the solid curve \(x_1=\sqrt{x_2^2+1}\).  The dashed affine
line \(\mathcal A\) satisfies \(x_1=x_2\), so it lies strictly below this
boundary, while the distance between the two curves tends to zero as
\(x_2\to+\infty\).}
\label{fig:weak-infeasibility-geometry}
\end{figure}

Homogenizing with \(\tau\geq0\) gives, in the coordinate order
\((\tau;x_1,x_2,x_3)\),
\begin{equation}
 C=\R_+\times\Lcone_3,
 \qquad
 B=\begin{pmatrix}
     0&1&-1&0\\
    -1&0& 0&1
   \end{pmatrix}.
 \label{eq:weak-example-B}
\end{equation}
Indeed,
\[
 \ker B\cap C=\{(0;t,t,0):t\geq0\},
\]
so \(\tau\) vanishes although the homogeneous feasible cone is nonzero.
\begin{samepage}
We now exhibit an infeasibility certificate.  Since \(\rank C=3\), the system
\(\mathfrak N(\Sigma)\) has three positions for facial-reduction information
and a fourth position for the terminal slack.  Specializing
\eqref{eq:direct-exact-dual} gives
\begin{subequations}
\label{eq:weak-example-exact-dual}
\begin{align}
 y_j&=B^T\lambda_j &&(1\leq j\leq3),
 \label{eq:weak-example-exact-dual-range}\\
 y_4&=-e_\tau-B^T\eta,
 \label{eq:weak-example-exact-dual-terminal}\\
 y_j&=q_j+z_j,\qquad q_j\in C &&(1\leq j\leq4),
 \label{eq:weak-example-exact-dual-decomposition}\\
 z_1&=0,\qquad
 z_j\in\operatorname{tan}'(q_{j-1},C) &&(2\leq j\leq4).
 \label{eq:weak-example-exact-dual-tangent}
\end{align}
\end{subequations}
\end{samepage}
where \(\eta,\lambda_j\in\R^2\).  A feasible point is
\begin{equation}
 \eta=0,\qquad
 \lambda_1=(1,0),\qquad
 \lambda_2=\lambda_3=(1,-1),
 \label{eq:weak-example-multipliers}
\end{equation}
together with
\begin{equation}
\begin{array}{c|ccc}
 j&y_j&q_j&z_j\\
 \hline
 1&(0;1,-1,0)&(0;1,-1,0)&0\\
 2&(1;1,-1,-1)&(1;1,-1,0)&(0;0,0,-1)\\
 3&(1;1,-1,-1)&(1;1,-1,0)&(0;0,0,-1)\\
 4&(-1;0,0,0)&0&(-1;0,0,0).
\end{array}
 \label{eq:weak-example-feasible-point}
\end{equation}
Direct substitution using \eqref{eq:weak-example-B}, together with the table,
verifies that
\eqref{eq:weak-example-multipliers}--%
\eqref{eq:weak-example-feasible-point} satisfy
\eqref{eq:weak-example-exact-dual-range}--%
\eqref{eq:weak-example-exact-dual-decomposition}.
The three memberships in
\eqref{eq:weak-example-exact-dual-tangent} follow by direct substitution in
the blockwise representation
\eqref{eq:product-lift-source}--\eqref{eq:product-lift-image}.
Thus the displayed data extend to a feasible point of
\(\mathfrak N(\Sigma)\), certifying the infeasibility of \(\Sigma\).
\end{example}

\section{Relation to existing exact-duality frameworks}
\label{sec:related}

Ramana's exact dual for semidefinite programming augments the ordinary dual
slack with variables that encode enough of a facial-reduction sequence to
recover strong duality \cite{Ramana1997}.  Ramana, Tun\c{c}el, and Wolkowicz
made its connection with facial reduction explicit
\cite{RamanaTuncelWolkowicz1997}.  Pataki recast this construction using pairs
\((u_j,v_j)\) satisfying
\[
 u_j\in K^*,\qquad
 v_j\in\operatorname{tan}'(u_{j-1},K^*),
\]
with \(u_j+v_j\) carrying the facial-reduction information
\cite[Corollary~3]{Pataki2013}.

In the present self-dual SOCP setting, \(q_j\), \(z_j\), and
\(y_j=q_j+z_j\) play the respective roles of \(u_j\), \(v_j\), and
\(u_j+v_j\).  Proposition~\ref{prop:lifted-tangent} and
Corollary~\ref{cor:product-lifted-tangent} supply the Lorentz-cone
representation of \(\operatorname{tan}'\).  Thus the connection to Ramana's
dual is through Pataki's general derivation: the facial-reduction framework
comes from his work, while the SOCP representation of
\(\operatorname{tan}'\) is specific to the present construction.

This distinction explains how the present construction avoids the obstacle
encountered in the earlier Lorentz-cone specialization.  P\'olik and Terlaky
represent information orthogonal to the current face by a Lorentz--Siegel
cone, which is not a Lorentz cone \cite{PolikTerlaky2007}.  The formulation
here represents neither that cone nor the full tangent-space constraint.  It
uses the auxiliary mapping from
Corollary~\ref{cor:product-lifted-tangent} only along the facial-reduction
sequence.  Thus their particular extended dual leaves the SOCP class, whereas
the present lifted formulation does not.

Beyond this correspondence, three distinctions locate the result within the
existing literature.
\begin{enumerate}[leftmargin=2.2em]
\item Liu and Pataki's facial-reduction sequence cones provide a related
exact-duality framework \cite{LiuPataki2018}.  The result here instead
gives one affine formulation using only the original Lorentz-cone class.

\item Each auxiliary problem in a sequential facial-reduction method may
itself be an SOCP
\cite{WakiMuramatsu2013,LourencoMuramatsuTsuchiya2016}.  Such a method forms
later problems after solving earlier ones.  Here one polynomial-size affine
SOCP is constructed before any solving takes place.

\item Naldi and Sinn already prove polynomial-length feasibility and
infeasibility certificates for nice cones with arbitrary real coefficients
\cite{NaldiSinn2021}.  In the Blum--Shub--Smale (BSS) model, their result
places SOCP feasibility in
\(\mathsf{NP}_{\R}\cap\mathsf{coNP}_{\R}\).  In this model, real numbers are
exact inputs, and real arithmetic and comparisons have unit cost
\cite{BlumShubSmale1989}; this is different from the Turing bit model and does
not bound the bit length of a rational certificate.  Our claim is different:
an infeasibility certificate is itself a feasible point of one affine SOCP
feasibility system using only Lorentz-cone constraints.
\end{enumerate}

Accordingly, the proved contribution is a uniform polynomial-size
transformation from an affine SOCP feasibility system to an SOCP system
that is feasible exactly when the input system is infeasible, together with
the resulting attained exact dual over arbitrary real coefficients.  It is not
a general complexity result for exact SOCP solution and makes no priority
claim beyond this precise mathematical statement.

\section{Conclusion}

Every second-order cone program has a polynomial-size exact dual expressed
solely with affine equations and Lorentz-cone memberships.  The dual gives
valid bounds unconditionally and attains the primal value whenever the primal
is feasible with finite value.  The same construction transforms any affine
second-order cone feasibility system into another such system whose
feasibility is equivalent to infeasibility of the original, including in the
weakly infeasible case.

The construction therefore keeps both exact duality and infeasibility
certification within the second-order cone modeling class.  It achieves this
with an auxiliary mapping satisfying containment and rescaling along one
facial-reduction sequence, rather than a representation of the full
tangent-space constraint.  The
result concerns exact formulation size over arbitrary real coefficients; it
does not imply favorable numerical conditioning, polynomial-time exact
solution, or polynomial bit length of rational certificates.

\section*{Statements and Declarations}

\subsection*{Funding}
This work was supported in part by the Air Force Office of Scientific Research
under Award No.~FA9550-23-1-0508.

\subsection*{Competing interests}
The author has no relevant financial or non-financial interests to disclose.

\subsection*{Data availability}
No datasets were generated or analyzed during the current study.


\begin{thebibliography}{99}
\small

\bibitem{BlumShubSmale1989}
L.~Blum, M.~Shub, and S.~Smale,
\emph{On a theory of computation and complexity over the real numbers:
NP-completeness, recursive functions and universal machines},
Bulletin of the American Mathematical Society 21 (1989), 1--46.
\url{https://doi.org/10.1090/S0273-0979-1989-15750-9}.

\bibitem{BorweinWolkowicz1981Facial}
J.~M.~Borwein and H.~Wolkowicz,
\emph{Facial reduction for a cone-convex programming problem},
Journal of the Australian Mathematical Society, Series A 30(3) (1981),
369--380.
\url{https://doi.org/10.1017/S1446788700017250}.

\bibitem{BorweinWolkowicz1981Regularizing}
J.~M.~Borwein and H.~Wolkowicz,
\emph{Regularizing the abstract convex program},
Journal of Mathematical Analysis and Applications 83(2) (1981), 495--530.
\url{https://doi.org/10.1016/0022-247X(81)90138-4}.

\bibitem{DrusvyatskiyWolkowicz2017}
D.~Drusvyatskiy and H.~Wolkowicz,
\emph{The many faces of degeneracy in conic optimization},
Foundations and Trends in Optimization 3(2) (2017), 77--170.
\url{https://doi.org/10.1561/2400000011}.

\bibitem{LiuPataki2018}
M.~Liu and G.~Pataki,
\emph{Exact duals and short certificates of infeasibility and weak
infeasibility in conic linear programming},
Mathematical Programming 167 (2018), 435--480.
\url{https://doi.org/10.1007/s10107-017-1136-5}.

\bibitem{LourencoMuramatsuTsuchiya2016}
B.~F.~Lourenço, M.~Muramatsu, and T.~Tsuchiya,
\emph{Weak infeasibility in second order cone programming},
Optimization Letters 10 (2016), 1743--1755.
\url{https://doi.org/10.1007/s11590-015-0982-4}.

\bibitem{LourencoMuramatsuTsuchiya2021}
B.~F.~Lourenço, M.~Muramatsu, and T.~Tsuchiya,
\emph{Solving SDP completely with an interior point oracle},
Optimization Methods and Software 36(2--3) (2021), 425--471.
\url{https://doi.org/10.1080/10556788.2020.1850720}.

\bibitem{NaldiSinn2021}
S.~Naldi and R.~Sinn,
\emph{Conic programming: infeasibility certificates and projective geometry},
Journal of Pure and Applied Algebra 225(7) (2021), 106605.
\url{https://doi.org/10.1016/j.jpaa.2020.106605}.

\bibitem{Pataki2013}
G.~Pataki,
\emph{Strong duality in conic linear programming: facial reduction and
extended duals},
in Computational and Analytical Mathematics, Springer Proceedings in
Mathematics \& Statistics 50 (2013), 613--634.
\url{https://doi.org/10.1007/978-1-4614-7621-4_28}.

\bibitem{PolikTerlaky2007}
I.~P\'olik and T.~Terlaky,
\emph{Exact duality for optimization over symmetric cones},
AdvOL Report 2007/10 (2007).
\url{https://optimization-online.org/wp-content/uploads/2007/08/1754.pdf}.

\bibitem{Ramana1997}
M.~V.~Ramana,
\emph{An exact duality theory for semidefinite programming and its complexity
implications}, Mathematical Programming 77 (1997), 129--162.
\url{https://doi.org/10.1007/BF02614433}.

\bibitem{RamanaTuncelWolkowicz1997}
M.~V.~Ramana, L.~Tun\c{c}el, and H.~Wolkowicz,
\emph{Strong duality for semidefinite programming},
SIAM Journal on Optimization 7(3) (1997), 641--662.
\url{https://doi.org/10.1137/S1052623495288350}.

\bibitem{WakiMuramatsu2013}
H.~Waki and M.~Muramatsu,
\emph{Facial reduction algorithms for conic optimization problems},
Journal of Optimization Theory and Applications 158 (2013), 188--215.
\url{https://doi.org/10.1007/s10957-012-0219-y}.

\end{thebibliography}
\end{document}